\theoremstyle{plain}
\newtheorem{theo}{Theorem}
\newtheorem{lem}{Lemma}
\theoremstyle{definition}
\newtheorem{ex}{Example}
\newtheorem{rem}{Remark}
\def \Z{{\mathbb Z}}
\def \Q{{\mathbb Q}}
\def \C{{\mathbb C}}
\def \calO{{\mathcal O}}
\def \calA{{\mathcal A}}
\def \calD{{\mathcal D}}
\def \Gal{\mathrm{Gal}}
\def \Tr{\mathrm{Tr}}
\author{Hajime Ogawa}
\address{Department of Mathematics,
Interdisciplinary Faculty of Science and Engineering,
Shimane University,
Matsue, Shimane, 690-8504, Japan}
\email{hollyredp2977@gmail.com}
\author{Miho Aoki }
\address{Department of Mathematics,
Interdisciplinary Faculty of Science and Engineering,
Shimane University,
Matsue, Shimane, 690-8504, Japan}
\email{aoki@riko.shimane-u.ac.jp}
\subjclass[2020]{Primary 11R04, 11R16,  Secondary 11C08, 11R80. }
\keywords{the ring of integers, simplest cubic fields, Shanks' cubic polynomial}
\thanks{This work was supported by JSPS KAKENHI Grant Number JP21K03181} 
\title[Simplest cubic fields]{Galois module structure of algebraic integers \\ of the simplest cubic field}
\begin{document}

\begin{abstract} 
Let $L_n$ be a simplest cubic field with  Galois group $G=\Gal (L_n/\Q)$.
The associated order is denoted as $\calA_{L_n/\Q}:= \{ x\in \Q [G] \, |\, x \cdot \calO_{L_n} \subset \calO_{L_n } \}$,
where $\calO_{L_n}$ is the ring of integers of $L_n$.
Leopoldt showed that $\calO_{L_n} \simeq \calA_{L_n/\Q}$ as $\calA_{L_n/\Q}$-modules.
In this paper, 
we give a generator of the $\calA_{L_n/\Q}$-module $\calO_{L_n}$ explicitly using the roots of Shanks' cubic polynomial.
If $L_n/\Q$ is tamely ramified, then we have $\calA_{L_n/\Q}=\Z [G]$, 
and the conjugates form a normal 
integral basis, which has been obtained explicitly in the previous work of  Hashimoto and the second author.
\end{abstract}
\maketitle
\section{Introduction}\label{sec:intro}

Let $n$ be an integer.
We consider the Shanks' cubic polynomial $f_n(X)\in\Z[X]$ defined by
\begin{equation}\notag
f_n(X)=X^3-nX^2-(n+3)X-1.
\end{equation}
The polynomial $f_n(X)$ is irreducible for all $n\in\Z$,
and $L_n:=\Q(\rho_n)$ is a cyclic cubic field, where $\rho_n$ is a root of 
$f_n(X)$. Let $\sigma$ be the generator of $G:=$Gal$(L_n/\Q)$ that satisfies
$\sigma (\rho_n)=-1/(1+\rho_n)$.
For any $x\in L_n$, we put 
$x':=\sigma (x)$ and $x'':=\sigma^2 (x)$.
The field is known as   {\it  a simplest cubic field}. 
See \cite{S} for details.
We have  $L_n=L_{-n-3}$ since $f_n(X)=-X^3f_{-n-3}(1/X)$.
We express $\Delta_n:=n^2+3n+9=bc^3$ 
using  $b,c\in\Z_{>0}$, where $b$ is cube-free.
The discriminant of $f_n(X)$ is $d(f_n)=\Delta_n^2$, 
and the conductor $\mathfrak{f}_{L_n}$ 
of $L_n$ can be expressed as follows
(see Cusick \cite[Lemma 1]{C} when that $\Delta_n$ is square-free,
Washington \cite[Proposition 1]{W2} when $n\not\equiv3 \pmod{9}$, Komatsu \cite[Lemma~1.4]{K} and Kashio-Sekigawa \cite[Remark~1.5]{KS} in general):
\begin{equation}\label{eq:cond}
\mathfrak{f}_{L_n}=\gamma\prod_{\substack{
p\mid b
\\
p\neq3
}}p,\ \ 
\gamma=
\begin{cases} 
1, & \text{if}\ 3\nmid n\ \text{or}\ n\equiv12 \pmod{27},
\\
3^2, & \text{otherwise},
\end{cases}
\end{equation}
where the product runs over all prime numbers $p$ dividing $b$.
The discriminant of $L_n$ is $D_{L_n}=\mathfrak{f}_{L_n}^2$ from the conductor-discriminant formula 
 (\cite[Theorem~3.11]{W1}).
Let $\calO_{L_n}$ be  the ring of integers of $L_n$.
Leopoldt showed that $\calO_{L_n} \simeq \calA_{L_n/\Q}$ as $\calA_{L_n/\Q}$-modules,
where  $\calA_{L_n/\Q}:= \{ x\in \Q [G] \, |\, x \cdot \calO_{L_n} \subset \calO_{L_n } \}$
is the associated order for $L_n/\Q$. 
In this paper, 
we give a generator of the $\calA_{L_n/\Q}$-module $\calO_{L_n}$ explicitly using the roots of the polynomial $f_n(X)$.
If $L_n/\Q$ is tamely ramified, then it  has been obtained in the previous work of  Hashimoto and the second author
\cite{HA1}, which is a generalization of works of
 Lehmer \cite{Leh} (when $\Delta_n =\mathfrak f_{L_n}$ is a prime number),  Ch\^{a}telet \cite{Ch} and Lazarus \cite{La}  (when  $\Delta_n =\mathfrak f_{L_n}$ is 
square-free).
\section{Branch classes and associated orders}\label{sec:BA}

This section discusses the Galois module structure of the ring of integers of an abelian number field
that was obtained by Leopoldt \cite{Leo}.
We have a well-written article \cite{Le} by Lettl that simplifies Leopoldt's work.
Let $L$ be an abelian number field with  Galois group $G$ and conductor $\mathfrak f$. Let $\mathscr X$  be the  group of Dirichlet characters associated to $L$.
Let $v_p(x)$  denote the $p$-adic valuation of $x \in \Q$ for a prime number $p$.
For any $m\in \mathbb Z_{>0}$, we put
\[
p (m) :=\prod_{\substack{ 
p\mid m
\\
p\neq 2
}} p,\qquad 
q(m) := \prod_{\substack{
p \\
v_p(m) \geq 2}} p^{v_p(m)},
\]
where the first product runs over all odd prime numbers $p$ dividing $m$, and the second product runs over all
prime numbers $p$ that satisfy $v_p (m)\geq 2$.
Put
\[
\calD (\mathfrak f):= \{ m\in \Z_{>0} \, |\, p(\mathfrak f) | m,\ m| \mathfrak f,\ m\not\equiv 2 \pmod{4}\}.
\]
 We define  {\it a branch class} of $\mathscr X$ for any $m\in \calD (\mathfrak f)$ as follows:
\[
\Phi_m := \{ \chi \in \mathscr{X}  \, |\, q(\mathfrak f_{\chi})=q(m) \},
\]
where $\mathfrak f_{\chi}$ is the conductor of $\chi$.
We have $\mathscr{X}=\coprod_{m\in \calD (\mathfrak f)} \Phi_m$ (disjoint union). For any $\chi \in X$,
let
\[
e_{\chi} := \frac{1}{[L:\Q]} \sum_{g \in G} \chi^{-1} (g) g
\]
be the idempotent. Furthermore, for any $m \in \calD (\mathfrak f)$, let
\[
e_m := \sum_{\chi \in \Phi_m} e_{\chi}.
\]
Since the branch class $\Phi_m$ is closed under conjugation,
we obtain $e_m \in \Q [G]$.
Let $\calO_L$ be the ring of integers of $L$.
The action of the group ring $\Q [G]$ on $L$ is given as follows:
$
x \cdot a :=\sum_{g\in G} n_{g} g (a) \quad \left( x=\sum_{g \in G} n_{g} g \in \Q  [G],\, a\in L \right).
$
We define  {\it the associated order} for $L/\Q$ as follows:
\[
\calA_{L/\Q}:= \{ x\in \Q [G] \, |\, x\cdot \calO_{L} \subset \calO_{L} \}.
\]
Leopoldt showed that $\calO_{L}$ is a free module of rank $1$ over $\calA_{L/\Q}$.
The generator $T \, (\in \calO_L)$ is given by
\[
T:=\sum_{m \in \calD (\mathfrak f)} \eta_m,\quad \eta_m:= \Tr_{\Q (\zeta_m)/(L\cap \Q(\zeta_m))}( \zeta_m).
\]
\begin{theo}[Leopoldt \cite{Leo}]\label{theo:Leopoldt}
We have
\[
\calO_L=\bigoplus_{m \in \calD (\mathfrak f)} \Z [G]\cdot \eta_m =\calA_{L/\Q}\cdot T
\]
and
\[
\calA_{L/\Q}=\Z [G][\{e_m\, |\, m\in \calD (\mathfrak f) \}].
\]
\end{theo}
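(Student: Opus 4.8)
The plan is to exploit the system of orthogonal idempotents $\{e_m\}_{m\in\calD(\mathfrak f)}$, which satisfy $\sum_{m\in\calD(\mathfrak f)}e_m=1$ and induce decompositions $\Q[G]=\bigoplus_{m\in\calD(\mathfrak f)}\Q[G]e_m$ and $L=\bigoplus_{m\in\calD(\mathfrak f)}e_mL$ of $\Q[G]$-modules. The first step is to locate each $\eta_m$ inside this decomposition by proving $e_{m'}.\eta_m=\delta_{mm'}\,\eta_m$; equivalently, the $\chi$-isotypic projection $e_\chi.\eta_m$ is nonzero exactly for $\chi\in\Phi_m$. This is a Gauss-sum computation: $e_\chi.\eta_m$ is, up to a nonzero rational factor, a Lagrange resolvent $\sum_a\chi(a)\zeta_{\mathfrak f_\chi}^a$ of the primitive character underlying $\chi$, and the passage to the trace $\Tr_{\Q(\zeta_m)/(L\cap\Q(\zeta_m))}\zeta_m$ annihilates precisely those components whose conductor fails the matching condition $q(\mathfrak f_\chi)=q(m)$ defining $\Phi_m$ (this is visible already for $L=\Q(\zeta_{p^2})$, where $\Tr_{\Q(\zeta_{p^2})/\Q(\zeta_p)}\zeta_{p^2}=0$). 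Consequently $\eta_m\in\calO_L\cap e_mL$ and, since $x\mapsto x.\eta_m$ identifies $\Q[G]e_m$ with $e_mL$, the $\Z[G]$-module $\Z[G].\eta_m$ is a full lattice in $e_mL$.

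The heart of the argument, and the step I expect to be the main obstacle, is to upgrade the rational decomposition $L=\bigoplus_m e_mL$ to an integral one,
\[
\calO_L=\bigoplus_{m\in\calD(\mathfrak f)}\bigl(\calO_L\cap e_mL\bigr),\qquad \calO_L\cap e_mL=\Z[G].\eta_m .
\]
I would verify this prime by prime, tensoring with $\Z_p$ for each $p\mid\mathfrak f$ and embedding $L\otimes_\Q\Q_p$ into a product of local cyclotomic fields dictated by the factorization of $\mathfrak f$. At a fixed $p$, the ramification filtration and the different pin down the $v_p$-valuations of the resolvents attached to the characters in each branch class; because every $\chi\in\Phi_m$ shares the same higher conductor part $q(\mathfrak f_\chi)=q(m)$, these resolvents have a common local behaviour, and this is exactly what forces $\Z_p[G].\eta_m$ to fill out the intersection $(\calO_L\cap e_mL)\otimes\Z_p$ and to split off as a direct summand of $\calO_L\otimes\Z_p$. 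A discriminant computation for the explicit family $\{\eta_m\}$, matched against $D_L$ via the conductor-discriminant formula, then shows that the summands exhaust $\calO_L$ and that no index is lost.

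Granting the integral decomposition, the remaining assertions follow formally. For $\calA_{L/\Q}=\Z[G][\{e_m\mid m\in\calD(\mathfrak f)\}]$, the inclusion $\supseteq$ holds because $e_m$ acts as the identity on $\Z[G].\eta_m$ and annihilates the other summands, all of which lie in $\calO_L$; hence each $e_m$, and so the $\Z[G]$-algebra they generate, preserves $\calO_L$. For $\subseteq$, take $x\in\calA_{L/\Q}$ and write $x=\sum_m x e_m$ with $xe_m\in\Q[G]e_m$. Since $x$ commutes with $e_m$, we have $x.\eta_m\in\calO_L\cap e_mL=\Z[G].\eta_m$, say $x.\eta_m=z_m.\eta_m$ with $z_m\in\Z[G]$; applying the isomorphism $\Q[G]e_m\xrightarrow{\sim}e_mL$, $y\mapsto y.\eta_m$, gives $xe_m=z_me_m$, whence $x=\sum_m z_me_m\in\Z[G][\{e_m\}]$. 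Finally, since $e_m.T=\eta_m$ and $e_m\in\calA_{L/\Q}$, each $\eta_m$ lies in $\calA_{L/\Q}.T$, so $\Z[G].\eta_m\subset\calA_{L/\Q}.T$ for all $m$; combined with the integral decomposition this yields $\calO_L\subseteq\calA_{L/\Q}.T\subseteq\calO_L$, completing the proof.
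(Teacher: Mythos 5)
A preliminary remark: the paper itself contains \emph{no} proof of Theorem~\ref{theo:Leopoldt}; it is quoted from Leopoldt \cite{Leo}, with Lettl \cite{Le} cited as a streamlined reference. So there is no in-paper argument to measure yours against, and I can only judge the proposal on its own terms. Its skeleton is the standard Leopoldt--Lettl one, and the purely formal part --- your final paragraph, deducing $\calA_{L/\Q}=\Z[G][\{e_m\}]$ and $\calO_L=\calA_{L/\Q}.T$ from the two inputs ``$e_{m'}.\eta_m=\delta_{mm'}\eta_m$'' and ``$\calO_L=\bigoplus_m\Z[G].\eta_m$'' --- is correct and complete as written: the map $\Q[G]e_m\to e_mL$, $y\mapsto y.\eta_m$, is an isomorphism by a dimension count once one knows $e_\chi.\eta_m\neq0$ for every $\chi\in\Phi_m$, and the rest follows exactly as you say.

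The gap is that the two inputs themselves, which carry all of the content of the theorem, are asserted rather than proved. For the first, the relation $e_{m'}.\eta_m=\delta_{mm'}\eta_m$ together with the nonvanishing $e_\chi.\eta_m\neq 0$ for $\chi\in\Phi_m$ rests on the evaluation $\sum_{a\in(\Z/m\Z)^\times}\chi(a)\zeta_m^a=\mu(m/\mathfrak f_\chi)\,\chi(m/\mathfrak f_\chi)\,\tau(\chi^\ast)$ (with $\chi^\ast$ the primitive character inducing $\chi$) and on the observation that the defining condition $q(\mathfrak f_\chi)=q(m)$ of $\Phi_m$ is precisely the condition that $m/\mathfrak f_\chi$ be squarefree and prime to $\mathfrak f_\chi$; you gesture at this with one example but do not carry it out. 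For the second --- which you rightly identify as the heart --- what you offer is a plan, not an argument: ``I would verify this prime by prime, tensoring with $\Z_p$\dots'' names the right kind of ingredients (localization, ramification, a discriminant comparison against the conductor--discriminant formula) but pins nothing down; in particular, the claim that the common value of $q(\mathfrak f_\chi)$ on a branch class ``forces $\Z_p[G].\eta_m$ to fill out the intersection'' is exactly the statement to be proved, restated. The known proofs close this step either by exhibiting a $\Z$-basis of each $\Z[G].\eta_m$ and computing the resulting discriminant via resolvents, so that the conductor--discriminant formula yields index one in $\calO_L$, or (as in \cite{Le}) by reducing to subfields of prime-power cyclotomic fields through a tensor decomposition of $\calO_L$ and doing the computation there explicitly. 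Until one of these is executed, what you have is an accurate road map of Leopoldt's theorem rather than a proof of it.
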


Let $L_n$ be the simplest cubic field defined in \S \ref{sec:intro}.
We express $\Delta_n=n^2+3n+9=de^2c^3$ using $d,e,c\in\mathbb{Z}_{>0}$, where
$d$ and $e$ are square-free and $(d,e)=1$.
According to (\ref{eq:cond}), the conductor of $L_n$ is given as follows.

\begin{enumerate}
\item[(T1)]\  If $3 \nmid n$, then we have $3\nmid d, 3\nmid e, 3\nmid c$ and
$\mathfrak f_{L_n}=de$.
\item[(T2)]\ If $n\equiv 12 \pmod{27}$, then we have $3\nmid d, 3\nmid e, 3 ||  c$ and
$\mathfrak f_{L_n}=de$.
\item[(W1)]\ If $n\equiv 3\pmod{9}, n\not\equiv 12\pmod{27}$, then we have $3\nmid d, 3\nmid e, 3 || c$ and
$\mathfrak f_{L_n}=9de$.
\item[(W2)]\ If $n\equiv 0,6 \pmod{9}$, then we have $3\nmid d, 3 || e, 3\nmid c$ and
$\mathfrak f_{L_n}=3de$.
\end{enumerate}

The set $\calD (\mathfrak f_{L_n})$ is
\begin{equation}\notag
\calD (\mathfrak f_{L_n }) =\begin{cases}
\{ de \} & \text{(T1 or T2)}, \\
\{ 9de, 3de \} &  \text{(W1)}, \\
\{ 3de, de \} &  \text{(W2)},
\end{cases}
\end{equation}
and hence the associated order $\calA_{L_n/\Q}$ and the generator $T$ of $\calO_{L_n}$ are given by
\begin{equation}\notag
\calA_{L_n/\Q}=\begin{cases}
\Z [G] [e_{de}] &  \text{(T1 or T2)}, \\
\Z [G] [e_{9de},e_{3de}] &   \text{(W1)}, \\
\Z [G][e_{3de}, e_{de} ] &  \text{(W2)},
\end{cases}
\end{equation}
\begin{equation}\notag
T= \sum_{m \in \calD (\mathfrak f_{L_n} )} \eta_m =\begin{cases}
\eta_{de}& \text{(T1 or T2)}, \\
\eta_{9de}+\eta_{3de} &  \text{(W1)}, \\
\eta_{3de}+\eta_{de}&  \text{(W2)}.
\end{cases}
\end{equation}
\section{Tamely ramified cases}\label{sec:tame}

Suppose $3 \nmid n$ or $n\equiv 12 \pmod{27}$. Since the conductor of $L_n$ is 
$\mathfrak f_{L_n}=de$, which is  square-free, $L_n/\Q$ is tamely ramified.
In this case, the corresponding group of Dirichlet characters is $\mathscr{X}=\Phi_{de} =
\{ {\bf 1}, \chi, \chi^2 \}$, where $\bf 1$ is the trivial character and $\chi : G \to \C^{\times},\
\sigma \mapsto \zeta_3$. Since $e_{de}=1$, we have $\calA_{L_n/\Q}=\Z [G][e_{de}]=\Z [G]$ and
$\calO_{L_n}=\Z [G]\cdot  \eta_{de}$.
The ring of integers $\calO_{L_n}$ is isomorphic to $\Z [G]$ as $\Z [G]$-module, and the conjugates $\{
\eta_{de}, \eta_{de}', \eta_{de}'' \}$ of $\eta_{de}$ is a normal integral basis of $L_n$.
 Put $\rho := \rho_n$, $\zeta := \zeta_3$ and  $\Delta_n =n^2+3n+9=A_n A_n'$ with
$A_n:=n+3(1+\zeta),\, A_n':=n+3(1+\zeta^2)$.
In this case, the generator of the $\Z [G]$-module $\calO_{L_n}$ has been obtained in the previous work \cite{HA1}.
\begin{theo}[\cite{HA1}]\label{theo:HA}
Suppose   $3 \nmid n$ or $n\equiv 12 \pmod{27}$. 
Let $a_0$ and $a_1$ be integers that satisfy $ec=a_0^2-a_0a_1+a_1^2$ and
$a_0+a_1\zeta\mid A_n$ in $\Z [\zeta]$.
Put $m=(\varepsilon ec^2-n(a_0+a_1))/3\in\mathbb{Z}$, where
$\varepsilon\in\{\pm1\}$ is given by
\[
\varepsilon: =
\begin{cases}
\left( \frac{n (a_0+a_1)}{3} \right) & \rm{(T1)}, 
\\
\left( \frac{a_0}{3} \right) & \rm{(T2)},
\end{cases}
\]
where $\left( \frac{\cdot}{3} \right)$ is the Legendre symbol.
Then
\begin{equation*}
\alpha :=\frac{1}{ec^2}(a_0\rho_n+a_1\rho_n'+m)
\end{equation*}
is a generator of a normal integral basis of the simplest cubic field $L_n$, namely we have
$\calO_{L_n}=\Z [G] \cdot \alpha$.
\end{theo}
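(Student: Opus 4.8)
The plan is to reduce the statement to two independent verifications via the standard discriminant criterion: three algebraic integers in $\calO_{L_n}$ form a $\Z$-basis precisely when their discriminant equals $D_{L_n}$. Since $\alpha'=\sigma(\alpha)$ and $\alpha''=\sigma^2(\alpha)$ are the conjugates of $\alpha$, it suffices to show (a) $\alpha\in\calO_{L_n}$ and (b) $\disc(\alpha,\alpha',\alpha'')=D_{L_n}=(de)^2$. Granting both, the lattice $\Z\alpha+\Z\alpha'+\Z\alpha''$ has index $1$ in $\calO_{L_n}$, so $\{\alpha,\alpha',\alpha''\}$ is a normal integral basis and $\calO_{L_n}=\Z[G].\alpha$, as claimed.

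For (b) I would pass to the Lagrange resolvents. Put $\theta_1:=\rho+\zeta\rho'+\zeta^2\rho''$ and $\theta_2:=\rho+\zeta^2\rho'+\zeta\rho''$; from the symmetric functions of the roots of $f_n$ one finds $\theta_1\theta_2=\Delta_n$. Because $\sigma$ cycles $\rho\mapsto\rho'\mapsto\rho''$, a short computation gives
\[
\Tr(\alpha)=\frac{(a_0+a_1)n+3m}{ec^2}=\varepsilon,
\]
where this first identity is exactly the defining relation of $m$, together with
\[
\alpha+\zeta\alpha'+\zeta^2\alpha''=\frac{a_0+a_1\zeta^2}{ec^2}\,\theta_1,\qquad
\alpha+\zeta^2\alpha'+\zeta\alpha''=\frac{a_0+a_1\zeta}{ec^2}\,\theta_2.
\]
As $\disc(\alpha,\alpha',\alpha'')$ is the square of the circulant determinant $\Tr(\alpha)\,(\alpha+\zeta\alpha'+\zeta^2\alpha'')\,(\alpha+\zeta^2\alpha'+\zeta\alpha'')$, and $(a_0+a_1\zeta)(a_0+a_1\zeta^2)=a_0^2-a_0a_1+a_1^2=ec$, this collapses to
\[
\disc(\alpha,\alpha',\alpha'')=\left(\varepsilon\,\frac{ec\cdot\Delta_n}{(ec^2)^2}\right)^2=(de)^2,
\]
using $\Delta_n=de^2c^3$ and $\varepsilon^2=1$. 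This is the clean half of the argument, and it transparently explains why $m$ is forced to make $\Tr(\alpha)=\pm1$.

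The main obstacle is (a), the integrality of $\alpha$, equivalently that the characteristic polynomial $X^3-\varepsilon X^2+s_2X-N_{L_n/\Q}(\alpha)$ lies in $\Z[X]$. Since the two nontrivial resolvents of $\alpha$ multiply to $de$, one has $s_2=(\varepsilon^2-de)/3=(1-de)/3$, which is already an integer: every prime divisor $p\neq3$ of $\Delta_n$ satisfies $p\equiv1\pmod3$ (from $4\Delta_n=(2n+3)^2+27$, which forces $-3$ to be a quadratic residue modulo $p$), whence $d\equiv e\equiv c\equiv1\pmod3$ and $de\equiv1\pmod3$. It remains to prove $N_{L_n/\Q}(\alpha)\in\Z$, i.e. $(ec^2)^3\mid N_{L_n/\Q}(a_0\rho+a_1\rho'+m)$. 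For this I would argue prime by prime, using the resolvent identity $\theta_1^3=A_n^2A_n'$ (derived from $\theta_1^3+\theta_2^3=(2n+3)\Delta_n$ and $\theta_1^3\theta_2^3=\Delta_n^3$). Writing $A_n=(a_0+a_1\zeta)\kappa$ as permitted by the hypothesis $a_0+a_1\zeta\mid A_n$ in $\Z[\zeta]$, the cubed resolvents become $(\alpha+\zeta\alpha'+\zeta^2\alpha'')^3=d\,(a_0+a_1\zeta^2)^2\kappa/c^2$ and its conjugate, so that at each prime $p\mid ec$ with $p\neq3$ — all of which split in $\Z[\zeta]$ — the required divisibility reduces to a valuation bookkeeping fed by the norm relations $N_{\Q(\zeta)/\Q}(a_0+a_1\zeta)=ec$ and $N_{\Q(\zeta)/\Q}(A_n)=\Delta_n$.

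The genuinely delicate step is the prime $3$, and this is exactly why the definition of $\varepsilon$ bifurcates according to $n\bmod27$. In both sub-cases I would first confirm that the prescribed Legendre symbol is nonzero and makes $m$ an integer, i.e. $3\mid\varepsilon ec^2-n(a_0+a_1)$; the congruence $ec\equiv(a_0+a_1)^2\pmod3$ controls the relevant residues here. The remaining $3$-adic integrality of $\alpha$ I expect to require a direct $3$-adic expansion of $N_{L_n/\Q}(\alpha)$ whose outcome depends on whether $3\nmid n$ or $n\equiv12\pmod{27}$, and this is the step I anticipate needing the most care. Away from $3$, everything is governed cleanly by the factorization $A_n=(a_0+a_1\zeta)\kappa$ in $\Z[\zeta]$ and the identity $\theta_1^3=A_n^2A_n'$.
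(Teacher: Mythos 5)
First, a point of reference: the paper does not prove Theorem~\ref{theo:HA} at all --- it is imported verbatim from \cite{HA1} --- so there is no in-paper argument to measure you against. Judged on its own terms, your overall strategy (show $\alpha\in\calO_{L_n}$ and $d(\alpha,\alpha',\alpha'')=D_{L_n}$, then conclude by the index formula) is the natural one, and your discriminant half is correct and complete: the resolvent identities $\alpha+\zeta\alpha'+\zeta^2\alpha''=\frac{a_0+a_1\zeta^2}{ec^2}\theta_1$, $\theta_1\theta_2=\Delta_n$, $(a_0+a_1\zeta)(a_0+a_1\zeta^2)=ec$ and $\Tr(\alpha)=\varepsilon$ do give $d(\alpha,\alpha',\alpha'')=(de)^2$.

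The genuine gap is part (a), and it is not a peripheral detail but the actual content of the theorem. Your computation in (b) uses only $\varepsilon^2=1$, so it holds for \emph{both} choices of sign; everything that distinguishes the stated Legendre symbols from their negatives lives in the $3$-adic integrality of $N_{L_n/\Q}(\alpha)$, which you explicitly defer (``I expect\ldots'', ``I anticipate\ldots''). Likewise the divisibility $(ec^2)^3\mid N_{L_n/\Q}(a_0\rho+a_1\rho'+m)$ at primes $p\mid ec$, $p\neq 3$, is asserted to ``reduce to valuation bookkeeping'' but never carried out; note that after your reduction one still has to show $ec^3\mid (a_0+a_1\zeta^2)^3A_n+(a_0+a_1\zeta)^3A_n'$ and then control the whole expression modulo $27$, and this is where $\rho^2=\rho'+(n+1)\rho+2$ and the hypothesis $a_0+a_1\zeta\mid A_n$ have to be used in earnest. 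Until that is done you have only proved: \emph{if} $\alpha$ is an algebraic integer, \emph{then} it generates a normal integral basis. One smaller slip: your parenthetical $d\equiv e\equiv c\equiv 1\pmod 3$ fails in the subcase $n\equiv 12\pmod{27}$, where $3\,\|\,c$ (e.g.\ $n=12$ gives $\Delta_{12}=3^3\cdot 7$, $c=3$); the conclusion you actually need, $de\equiv 1\pmod 3$, survives, but the $3$-part of $ec^2$ in the two subcases differs, which is precisely why the definition of $\varepsilon$ (and the verification that $m\in\Z$) bifurcates.
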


Since $\Z [G]^{\times} =\{ \pm 1_G, \pm \sigma, \pm \sigma^2\}$,
a normal integral basis 
$ \{ \eta_{de}, \eta_{de}' ,\eta_{de}'' \}$ 
coincides with $ \{ \alpha, \alpha', \alpha'' \}$ or $\{ -\alpha,
-\alpha', -\alpha'' \}$ where $\alpha$ is the element defined in the theorem above.
\begin{rem}\label{rem:quintic}
A  result similar to that of  Theorem~\ref{theo:HA} was recently obtained for Lehmer's cyclic quintic fields \cite{HA2}.
\end{rem}

\section{Wildly ramified cases}\label{sec:wild}

Suppose   $3 | n$ and $n\not\equiv 12 \pmod{27}$. Since the conductor of $L_n$ is 
$\mathfrak f_{L_n}=9de$ if $n\equiv 3 \pmod{9}, n\not\equiv 12 \pmod{27}$ and $3de$ if $n\equiv 0,6 \pmod{9}$,
$L_n/\Q$ is wildly ramified. Put $\rho := \rho_n$, $\zeta := \zeta_3$ and  $\Delta_n =n^2+3n+9=A_n A_n'$ with
$A_n:=n+3(1+\zeta),\, A_n':=n+3(1+\zeta^2)$.

\begin{theo}\label{theo:OA}
Suppose   $3 | n$ and $n\not\equiv 12 \pmod{27}$.
Let $a_0$ and $a_1$ be integers that satisfy the following:
\[ec=
\begin{cases}
a_0^2-a_0a_1+a_1^2 &  \rm{(W1)}, \\
3(a_0^2-a_0a_1+a_1^2)  &  \rm{(W2)},
\end{cases}
\]
and
$a_0+a_1\zeta\mid A_n$ in $\Z [\zeta]$.
Put 
\[
\alpha :=\frac{1}{ec^2}(3a_0\rho+3a_1\rho'-n(a_0+a_1)).
\]
Then we have
\[
\calO_{L_n}=\Z [G] \cdot \alpha \oplus \Z =\calA_{L_n/\Q} \cdot (\alpha+1).
\]
\end{theo}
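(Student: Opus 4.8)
The plan is to reduce the entire statement to the single assertion that $\{1,\alpha,\alpha'\}$ is a $\Z$-basis of $\calO_{L_n}$; the $\Z[G]$- and $\calA_{L_n/\Q}$-module statements then follow formally from the idempotents recorded in \eqref{eq:A}. First I would record the identity $\Tr(\alpha)=\alpha+\alpha'+\alpha''=0$, which is immediate from $\rho+\rho'+\rho''=n$ and the shape of $\alpha$; equivalently, the trivial-branch idempotent $e_0:=\tfrac13(1+\sigma+\sigma^2)$ (which is $e_{3de}$ when $n\equiv3\pmod 9$ and $e_{de}$ when $n\equiv0,6\pmod 9$) annihilates $\alpha$. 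In particular $\alpha''=-\alpha-\alpha'$, so $\Z[G].\alpha=\Z\alpha+\Z\alpha'$ has $\Z$-rank at most $2$.

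Because $\Tr(\alpha)=0$, the Gram matrix of $\{1,\alpha,\alpha'\}$ is block diagonal and a short computation gives $\disc(1,\alpha,\alpha')=9\,e_2(\alpha)^2$, where $e_2(\alpha):=\alpha\alpha'+\alpha'\alpha''+\alpha''\alpha$. By \eqref{eq:Df} and \eqref{eq:cond} we have $D_{L_n}=(9de)^2$ when $n\equiv3\pmod 9$ and $D_{L_n}=(3de)^2$ when $n\equiv0,6\pmod 9$. Hence the desired equality $\calO_{L_n}=\Z\oplus\Z\alpha\oplus\Z\alpha'$ will follow from two facts: that $\alpha$ is an algebraic integer, and that $e_2(\alpha)=\pm3de$ (resp.\ $\pm de$). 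Both come from the same two computations: once I show $e_2(\alpha)=\pm3de$ (resp.\ $\pm de$) and that the norm $N(\alpha)=\alpha\alpha'\alpha''$ is an integer, the characteristic polynomial $X^3+e_2(\alpha)X-N(\alpha)$ of $\alpha$ lies in $\Z[X]$, so $\alpha\in\calO_{L_n}$, and then the discriminant match forces $\Z+\Z\alpha+\Z\alpha'=\calO_{L_n}$.

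Thus everything reduces to evaluating $e_2(\alpha)$ and $N(\alpha)$. I would expand the numerator $\beta:=ec^2\alpha=3a_0\rho+3a_1\rho'-n(a_0+a_1)$ and its conjugates, reducing $e_2(\beta)$ and $N(\beta)$ to the $\sigma$-invariant (hence rational) cyclic sums of $\rho,\rho',\rho''$. Besides the symmetric functions $\rho+\rho'+\rho''=n$, $\rho\rho'+\rho'\rho''+\rho''\rho=-(n+3)$ and $\rho\rho'\rho''=1$ coming from $f_n$, the decisive ones are $P:=\rho^2\rho'+\rho'^2\rho''+\rho''^2\rho$ and $Q:=\rho\rho'^2+\rho'\rho''^2+\rho''\rho^2$, which satisfy $P+Q=-(n^2+3n+3)$ and $P-Q=\pm\Delta_n=\pm de^2c^3$; this is where the factorization $\Delta_n=A_nA_n'$ enters. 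The resulting quadratic (resp.\ cubic) forms in $a_0,a_1$ must then be simplified using the two hypotheses: the norm relation $ec=a_0^2-a_0a_1+a_1^2=N_{\Q(\zeta)/\Q}(a_0+a_1\zeta)$ (resp.\ $ec=3N_{\Q(\zeta)/\Q}(a_0+a_1\zeta)$) and the divisibility $a_0+a_1\zeta\mid A_n$ in $\Z[\zeta]$. The main obstacle is precisely this simplification: one must show that $e^2c^4=(ec^2)^2$ divides $e_2(\beta)$ with quotient the single integer $\pm3de$ (resp.\ $\pm de$), that $(ec^2)^3$ divides $N(\beta)$, and that the prime $3$ appears with the multiplicity separating the two ramification cases. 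I expect to control this by working in $\Z[\zeta]$: writing $A_n=(a_0+a_1\zeta)\mu$, the norm condition fixes $N_{\Q(\zeta)/\Q}(\mu)$ and lets the factors of $e$ and $c^2$ in the denominator be matched against $a_0+a_1\zeta$ and $\mu$, after which $P$ and $Q$ reassemble into the stated values.

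Finally, granting that $\{1,\alpha,\alpha'\}$ is a $\Z$-basis of $\calO_{L_n}$, the module structure is formal. Since $\alpha''=-\alpha-\alpha'$ we have $\Z[G].\alpha=\Z\alpha+\Z\alpha'$, a free $\Z$-module of rank $2$ meeting $\Z=\Z.1$ trivially, whence $\calO_{L_n}=\Z[G].\alpha\oplus\Z$. For the associated order, applying the idempotents gives $e_0.(\alpha+1)=1$ and $(1-e_0).(\alpha+1)=\alpha$; since both $e_0$ and $1-e_0$ lie in $\calA_{L_n/\Q}$ by \eqref{eq:A}, the module $\calA_{L_n/\Q}.(\alpha+1)$ contains $1$ and $\alpha$, hence contains $\Z[G].\alpha\oplus\Z=\calO_{L_n}$; the reverse inclusion is automatic because $\alpha+1\in\calO_{L_n}$ and $\calA_{L_n/\Q}.\calO_{L_n}\subseteq\calO_{L_n}$. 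This yields $\calO_{L_n}=\Z[G].\alpha\oplus\Z=\calA_{L_n/\Q}.(\alpha+1)$, as claimed.
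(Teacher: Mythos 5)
Your overall route is genuinely different from the paper's. The paper starts from Leopoldt's decomposition $\calO_{L_n}=\Z[G].\eta_{9de}\oplus\Z[G].\eta_{3de}$ (resp.\ with $3de,de$), notes $\eta_{3de}=\pm1$, and then constructs a generator of $\Z[G].\eta_{9de}$ by the Acciaro--Fieker method: it computes $e_{9de}.\calO_{L_n}$ from the explicit integral basis of Lemma~\ref{lem:IB}, converts the problem into the ideal identity $(g)=(g_1,g_2)$ in $\Z[G]$, pushes it through $\nu:\Z[G]\to\Z[\zeta]$, and evaluates the gcd $(ec,A_n/3)$ there --- this is exactly where the hypotheses $ec=a_0^2-a_0a_1+a_1^2$ (resp.\ $ec=3(\cdots)$) and $a_0+a_1\zeta\mid A_n$ are produced rather than assumed. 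You instead propose to verify the given $\alpha$ directly: $\Tr(\alpha)=0$, $\disc(1,\alpha,\alpha')=9e_2(\alpha)^2$, match against $D_{L_n}$, and then deduce both module statements formally. That skeleton is correct: the trace vanishes, the discriminant formula is right, and the final reductions ($\alpha''=-\alpha-\alpha'$, $e_0.(\alpha+1)=1$, $(1-e_0).(\alpha+1)=\alpha$ with $e_0,1-e_0\in\calA_{L_n/\Q}$ by \eqref{eq:A}) are all sound. If completed, this would be a more elementary proof that bypasses Lemma~\ref{lem:IB} and the ideal computation entirely.

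The gap is that the decisive computation is only announced, not performed, and it is precisely the step where the hypotheses on $a_0,a_1$ do their work. Writing $u:=3\rho-n$ and $\beta:=ec^2\alpha=a_0u+a_1u'$, one finds $e_2(u)=-3\Delta_n$, $\Tr(u^2)=6\Delta_n$, hence $e_2(\beta)=-3\Delta_n(a_0^2-a_0a_1+a_1^2)$; so $e_2(\alpha)=-3de$ (resp.\ $-de$) follows from the norm condition alone and is genuinely easy. But then the \emph{entire} burden of the divisibility hypothesis $a_0+a_1\zeta\mid A_n$ falls on the single remaining claim $N(\alpha)=N(\beta)/(ec^2)^3\in\Z$: for a pair $(a_0,a_1)$ with the correct norm but with $a_0+a_1\zeta$ mixing primes from $A_n$ and $A_n'$, the element $\alpha$ is \emph{not} an algebraic integer, so this divisibility cannot be avoided and must visibly enter the computation of $N(\beta)=(a_0^3+a_1^3)N(u)+a_0^2a_1P_u+a_0a_1^2Q_u$ through the sign/identification $P_u-Q_u=\pm27\Delta_n$ and the splitting $\Delta_n=A_nA_n'$. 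Your text acknowledges this ("the main obstacle is precisely this simplification", "I expect to control this by working in $\Z[\zeta]$") but does not carry it out, and without it you have not shown $\alpha\in\calO_{L_n}$ at all, so nothing downstream is established. To close the argument you must actually prove $e^3c^6\mid N(\beta)$ from $a_0+a_1\zeta\mid A_n$, for instance by relating $a_0u+a_1u'$ to the Lagrange-resolvent decomposition of $u$ over $\Q(\zeta)$ and tracking the contribution of each prime of $\Z[\zeta]$ dividing $ec$.
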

For the proof of Theorem~\ref{theo:OA}, we first find an integral basis of $L_n$ 
using the following lemma. For $x_1, x_2, x_3 \in L_n$, we define
the discriminant:
\[
d(x_1,x_2,x_3):= \begin{vmatrix}
x_1 & x_2 & x_3 \\
x_1' & x_2' & x_3'  \\
x_1'' & x_2'' & x_3'' \\
\end{vmatrix}^2.
\]
\begin{lem}\label{lem:HW}
{\cite[p.15, Theorem~1.5]{HW}}
Let $g(X)=X^3+b_1X^2+c_1X+d_1$ be an irreducible polynomial in
$\mathbb{Z}[X]$, and $\rho$ be a root of $g(X)$.
Put $K=\mathbb{Q}(\rho)$.
Suppose  $s,a,t\in\mathbb{Z}$ satisfy the following conditions
$(\mathrm{i})$ and $(\mathrm{ii})$.
\begin{align*}
\mathrm{(i)}\quad & d(1,\rho,\rho^2) =s^6a^2D_K,\\
\mathrm{(ii)}\quad & \dfrac{1}{2}g''(t)
\equiv 
0 \pmod{s},
\\
&g'(t)\equiv 
0 \pmod{s^2a},
\\
&g(t)\equiv
0 \pmod{s^3a^2}, 
\end{align*}
where $g'(X)\ (resp.\ g''(X))$ is the derivative
$(resp.\ the\ second\ derivative)$ of $g(X)$.
Put
\begin{eqnarray*}
\phi
\!\!\!&:=&\!\!\!
\frac{1}{s}(\rho-t),
\\
\psi
\!\!\!&:=&\!\!\!
\frac{1}{s^2a}(\rho^2+(t+b_1)\rho+t^2+b_1t+c_1).
\end{eqnarray*}
Then $\{1,\phi,\psi\}$ is an integral basis of $K$.
\end{lem}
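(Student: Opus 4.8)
The plan is to verify the two conditions that together characterize an integral basis: that $\phi$ and $\psi$ lie in $\calO_K$, and that $d(1,\phi,\psi)=D_K$. Granting both, the standard index formula $d(1,\phi,\psi)=[\calO_K:\Z+\Z\phi+\Z\psi]^2\,D_K$ forces the index to be $1$, so that $\{1,\phi,\psi\}$ generates $\calO_K$. The nonvanishing determinant computed below already shows $\{1,\phi,\psi\}$ is a $\Q$-basis, so only integrality and the discriminant value remain.

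The discriminant is immediate from condition (i). Expressing $\{1,\phi,\psi\}$ in terms of $\{1,\rho,\rho^2\}$ gives a triangular change-of-basis matrix with diagonal entries $1,\ 1/s,\ 1/(s^2a)$, hence determinant $1/(s^3a)$. Since discriminants scale by the square of this determinant, $d(1,\phi,\psi)=(s^3a)^{-2}d(1,\rho,\rho^2)=(s^6a^2)^{-1}\cdot s^6a^2\,D_K=D_K$. Integrality of $\phi$ is almost as quick: substituting $\rho=s\phi+t$ into $g(\rho)=0$ and expanding by Taylor's formula, using $g'''(X)=6$ for a monic cubic, yields $\phi^3+\frac{g''(t)/2}{s}\phi^2+\frac{g'(t)}{s^2}\phi+\frac{g(t)}{s^3}=0$, a monic equation whose coefficients are integers exactly because of the three congruences in (ii).

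The main obstacle is showing $\psi\in\calO_K$. The key reduction is that substituting $\rho=s\phi+t$ into the numerator of $\psi$ collapses it to $s^2\phi^2+s\tfrac{g''(t)}{2}\phi+g'(t)$, so that $\psi=\tfrac1a(\phi^2+p\phi)+q$ with integers $p:=\frac{g''(t)/2}{s}$ and $q:=\frac{g'(t)}{s^2a}$. It therefore suffices to prove $\theta:=(\phi^2+p\phi)/a\in\calO_K$. Writing the minimal polynomial of $\phi$ as $\phi^3+p\phi^2+u\phi+w$, the mod-$s^2a$ and mod-$s^3a^2$ parts of (ii) give $u=au'$ and $w=a^2w'$ with $u',w'\in\Z$. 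I would then compute the characteristic polynomial of $\theta$ through Vieta's formulas and Newton's identities: a short calculation yields $\Tr\theta=-2u'$, the norm $N(\theta)=-w'(pu'-aw')$, and the middle symmetric function $\sum_{i<j}\theta^{(i)}\theta^{(j)}=u'^2+pw'$.

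Since all three symmetric functions are integers, $\theta$ is a root of a monic polynomial over $\Z$, hence an algebraic integer, and so is $\psi=\theta+q$. Combined with $\phi\in\calO_K$ and $d(1,\phi,\psi)=D_K$, this completes the proof. The one point demanding genuine care is the bookkeeping in the last paragraph: one must track how the single factor of $a$ dividing $g'(t)$ and the double factor dividing $g(t)$ propagate through the power sums $\sum(\phi^{(i)})^k$ so that every factor of $a$ in the denominators of the symmetric functions of $\theta$ cancels. This cancellation is precisely what the quantitative form of condition (ii) is designed to guarantee, and verifying it is where I expect the real work to lie.
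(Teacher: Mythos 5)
Your proof is correct, but there is nothing in the paper to compare it against: the paper does not prove this lemma at all, it simply quotes it as \cite[p.15, Theorem~1.5]{HW} and uses it as a black box in the proof of Lemma~\ref{lem:IB}. So your blind attempt is, in effect, supplying the proof that the paper delegates to the literature. I checked your argument and it goes through. The discriminant step is the standard triangular change-of-basis computation, $d(1,\phi,\psi)=(s^3a)^{-2}d(1,\rho,\rho^2)=D_K$, and the Taylor expansion $g(s\phi+t)=s^3\phi^3+\tfrac{g''(t)}{2}s^2\phi^2+g'(t)s\phi+g(t)=0$ does give integrality of $\phi$ from (ii). The key collapse is also right: substituting $\rho=s\phi+t$ into the numerator of $\psi$ gives $s^2\phi^2+s(3t+b_1)\phi+(3t^2+2b_1t+c_1)=s^2\phi^2+s\tfrac{g''(t)}{2}\phi+g'(t)$, so $\psi=\tfrac1a(\phi^2+p\phi)+q$ with $p=\tfrac{g''(t)/2}{s}$, $q=\tfrac{g'(t)}{s^2a}\in\Z$. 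Finally, the cancellation you flagged as the real work does happen: writing the minimal polynomial of $\phi$ as $X^3+pX^2+au'X+a^2w'$ (with $u'=q$, $w'=g(t)/(s^3a^2)\in\Z$) and setting $e_1=-p$, $e_2=au'$, $e_3=-a^2w'$, one gets for $\theta=(\phi^2+p\phi)/a$ that $e_1(\theta)=\tfrac1a\bigl(e_1^2-2e_2+pe_1\bigr)=-2u'$, $e_2(\theta)=\tfrac{1}{a^2}\bigl(e_2^2-2e_1e_3+p(e_1e_2-3e_3)+p^2e_2\bigr)=u'^2+pw'$, and $e_3(\theta)=\tfrac{1}{a^3}e_3(e_1e_2-e_3)\cdot$(sign)$\,=-w'(pu'-aw')$, all integers, exactly as you asserted; hence $\theta$, and so $\psi=\theta+q$, is integral, and the index formula finishes. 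What your route buys is a fully elementary, self-contained verification (Vieta plus symmetric-function bookkeeping) of a result the paper only cites; the cost is that it is specific to cubics, whereas the source's treatment sits inside a general framework for cubic fields.
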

\begin{lem}\label{lem:IB}
Suppose  $3 | n$ and $n\not\equiv 12 \pmod{27}$. Put
\begin{align*}
&\phi:=
\begin{cases}
\frac{1}{c} (3\rho-n) &  \mathrm{if}\ n\equiv 3 \!\!\! \pmod{9}, \ n\not\equiv 12 \!\!\! \pmod{27}, \\
\frac{1}{3c}(3\rho -n) &  \mathrm{if}\ n\equiv 0,6 \!\!\! \pmod{9} ,\\
\end{cases}
\\
&\psi:= 
\frac{1}{3ec^2} (9\rho^2-6n\rho-2n^2-9n-27).
\end{align*}
Then, $\{1, \phi, \psi \}$ is an integral basis of $L_n$.
\end{lem}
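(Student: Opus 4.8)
The plan is to apply Lemma~\ref{lem:HW} to $g=f_n$, whose coefficients are $b_1=-n$, $c_1=-(n+3)$, $d_1=-1$, and to exhibit integers $s,a,t$ for which conditions $(\mathrm{i})$ and $(\mathrm{ii})$ hold and for which the resulting $\phi,\psi$ coincide with those in the statement. Since $3\mid n$ in both cases, the translation parameter should be $t=n/3\in\Z$; this is forced by the shape of $\phi$, since $\frac{1}{c}(3\rho-n)=\frac{1}{c/3}(\rho-n/3)$ and $\frac{1}{3c}(3\rho-n)=\frac{1}{c}(\rho-n/3)$. Matching denominators with $\phi=\frac{1}{s}(\rho-t)$ and $\psi=\frac{1}{s^2a}(\cdots)$ then dictates
\[
(s,a)=\begin{cases} (c/3,\,3e) & \text{if } n\equiv 3 \pmod 9,\ n\not\equiv 12 \pmod{27}, \\ (c,\,e/3) & \text{if } n\equiv 0,6 \pmod 9. \end{cases}
\]
These are integers because $3\| c$ in the first case and $3\| e$ in the second, as recorded in items $(\mathrm{ii})$, $(\mathrm{iii})$ of \S\ref{sec:BA}; likewise $t=n/3\in\Z$.

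First I would verify condition $(\mathrm{i})$. Using $d(1,\rho,\rho^2)=d(f_n)=\Delta_n^2=(de^2c^3)^2$ together with $D_{L_n}=\mathfrak f_{L_n}^2$ from (\ref{eq:Df}), where $\mathfrak f_{L_n}=9de$ (resp.\ $3de$), a direct computation gives $s^6a^2 D_{L_n}=d^2e^4c^6=\Delta_n^2$ in both cases, so $(\mathrm{i})$ holds.

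Next I would check the three congruences in $(\mathrm{ii})$. Here $\frac{1}{2}g''(X)=3X-n$ and $g'(X)=3X^2-2nX-(n+3)$, and the key step is to record the exact evaluations at $t=n/3$:
\[
\tfrac{1}{2}g''(t)=0,\qquad g'(t)=-\frac{\Delta_n}{3},\qquad g(t)=-\frac{(2n+3)\Delta_n}{27},
\]
the last identity resting on $2n^3+9n^2+27n+27=(2n+3)\Delta_n$. The first congruence is immediate since $\frac{1}{2}g''(t)=0$. For the second, $g'(t)/(s^2a)=-dec$ in both cases, an integer. The congruence with genuine content is the third: $g(t)/(s^3a^2)$ equals $-(2n+3)d/9$ in the first case and $-(2n+3)d/3$ in the second. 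Since $3\nmid d$ throughout, these are integers precisely when $9\mid 2n+3$ (resp.\ $3\mid 2n+3$); and indeed $n\equiv 3\pmod 9$ gives $2n+3\equiv 0\pmod 9$, while $3\mid n$ gives $2n+3\equiv 0\pmod 3$. This is the crux of the argument and the only place where the precise congruence hypotheses on $n$, and the factorization data $3\nmid d$, $3\| c$, $3\| e$, enter.

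With $(\mathrm{i})$ and $(\mathrm{ii})$ in hand, Lemma~\ref{lem:HW} yields that $\{1,\phi,\psi\}$ with $\phi=\frac{1}{s}(\rho-t)$ and $\psi=\frac{1}{s^2a}(\rho^2+(t+b_1)\rho+t^2+b_1t+c_1)$ is an integral basis of $L_n$. It then remains only to substitute the chosen $s,a,t$ and $b_1=-n$, $c_1=-(n+3)$: the expressions for $\phi$ reduce to the stated forms by construction, and since $s^2a=ec^2/3$ in both cases, $\psi$ simplifies to $\frac{1}{3ec^2}(9\rho^2-6n\rho-2n^2-9n-27)$, completing the proof. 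I expect the choice of $(s,a,t)$ and the third congruence to be the substantive content; the remaining manipulations are routine.
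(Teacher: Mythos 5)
Your proposal is correct and follows the paper's own proof exactly: the paper likewise applies Lemma~\ref{lem:HW} with $t=n/3$ and $(s,a)=(c/3,\,3e)$ or $(c,\,e/3)$ according to the case, merely asserting that conditions $(\mathrm{i})$ and $(\mathrm{ii})$ hold. Your verification of those conditions (in particular the identity $g(t)=-(2n+3)\Delta_n/27$ and the divisibility $9\mid 2n+3$, resp.\ $3\mid 2n+3$) supplies the details the paper leaves to the reader, and all computations check out.
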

\begin{proof}
The discriminant of $L_n$ is
\[
D_{L_n} =\begin{cases}
(9de)^2& \text{(W1)}, \\
(3de)^2&   \text{(W2)}. 
\end{cases}
\]
Let 
\begin{align*}
s &:=\begin{cases}
\frac{c}{3} & \text{(W1)}, \\
c&  \text{(W2)}, 
\end{cases}\\
a & := \begin{cases}
3e& \text{(W1)}, \\
\frac{e}{3}& \text{(W2)}, 
\end{cases}\\
t & :=\frac{n}{3}.
\end{align*}
Then, the integers $s, a$ and $t$ satisfy the conditions (i) and (ii) of Lemma~\ref{lem:HW}, 
and hence we get the assertion.
\end{proof}

\begin{proof}[Proof of Theorem~\ref{theo:OA}]
First, we consider the case of $n\equiv 3 \pmod{9}, \ n\not\equiv 12 \pmod{27}$.
In this case, we have $3\nmid d,\, 3\nmid e,\, 3||c$ and the conductor is 
$\mathfrak f_{L_n}=9de$. The corresponding group of Dirichlet characters is $\mathscr{X}=\Phi_{9de} \coprod \Phi_{3de}$ with
$\Phi_{9de}=\{ \chi, \chi^2 \},\, \Phi_{3de}= \{\bf 1 \}$,
where $\bf 1$ is the trivial character and $\chi : G \to \C^{\times},\
\sigma \mapsto \zeta_3$. We have $e_{9de}=e_{\chi}+e_{\chi^2}= (2-\sigma-\sigma^2)/3,\, 
e_{3de}=e_{\bf 1} =(1+ \sigma+\sigma^2)/3$, $\calD(\mathfrak f_{L_n})=\{ 9de,3de \}$,
$\calA_{L_n/\Q}=\Z [G][e_{9de},e_{3de}]$ and
$\calO_{L_n}=\Z [G] \cdot \eta_{9de} \oplus \Z [G]\cdot \eta_{3de}$.
Since $\eta_{3de}=\Tr_{\Q (\zeta_{3de})/(L_n\cap \Q(\zeta_{3de}))}  (\zeta_{3de})
=\Tr_{\Q (\zeta_{3de})/ \Q} (\zeta_{3de})=\mu (3de) (=\pm 1)$,
where $\mu$ is the M\"{o}bius function, we have the following: 
\begin{equation}\label{eq:O1}
\calO_{L_n} =\Z [G]\cdot \eta_{9de} \oplus \Z [G] \cdot \eta_{3de} = \Z [G] \cdot \eta_{9de} \oplus \Z   =\calA_{L_n/\Q} \cdot (\eta_{9de}+1).
\end{equation}
Let $\alpha$ be an element of $\calO_{L_n}$ that satisfies the following: 
\begin{equation}\label{eq:alpha1}
\Z [G]\cdot \alpha=\Z [G]\cdot \eta_{9de}.
\end{equation}
Since
\begin{equation}\label{eq:em}
e_m\cdot  \eta_{m'} = \begin{cases}
0 & \it{if}\ m\ne m', \\
\eta_m &  \it{if} \ m=m'\\
\end{cases}
\end{equation}
for any $m,m' \in \calD (\mathfrak f_{L_n})$ (see \cite[p.165, (2) and p.167, lines 1,2]{Le}), we have
$e_{9de}\cdot \calO_{L_n} =\Z [G]\cdot \alpha$
from (\ref{eq:O1}) and (\ref{eq:alpha1}). Let $ \{1,\phi,\psi \}$ be the integral basis of $L_n$ given by
Lemma~\ref{lem:IB}. 
We find $\alpha$ using the method proposed by Acciaro and Fieker \cite{AF}.
Since $e_{9de}\cdot  \Z=0$, we have 
\begin{equation}\label{eq:eO1}
e_{9de}\cdot  \calO_{L_n} =\Z [G]\cdot \alpha=(e_{9de}\cdot \phi) \Z +(e_{9de}\cdot \psi)\Z.
\end{equation}
Using $\rho^2=\rho'+(n+1)\rho+2$, we obtain the following:
\begin{align}
e_{9de}\cdot  \phi &=\frac{1}{c} (2\rho-\rho'-\rho''), \label{eq:e-phi-psi1}\\
e_{9de}\cdot \psi & =\frac{1}{3ec^2} ((2n+3)\rho+(3-n)\rho' -(n+6)\rho'') \notag .
\end{align}
Let $\ell :=3ec^2$. From (\ref{eq:eO1}) and (\ref{eq:e-phi-psi1}), there exists $g\in \Z [G]$
satisfying
\begin{equation}\label{eq:alpha-g-1}
\alpha= g \cdot \frac{\rho}{\ell}
\end{equation}
and
\begin{align}
e_{9de}. \phi =g_1\cdot  \frac{\rho}{\ell}, \hspace{10mm} & g_1:= \frac{\ell}{c} (2-\sigma -\sigma^2) \label{eq:e-phi-psi-g1},\\
e_{9de}. \psi =g_2\cdot  \frac{\rho}{\ell}, \hspace{10mm} & g_2:=\frac{\ell}{3ec^2} ((2n+3)+(3-n)\sigma -(n+6)\sigma^2) \notag .
\end{align}
From (\ref{eq:eO1}), (\ref{eq:alpha-g-1}) and (\ref{eq:e-phi-psi-g1}), we obtain the following:
\[
g\Z [G]\cdot  \frac{\rho}{\ell} =(g_1 \Z [G] +g_2 \Z [G] )\cdot  \frac{\rho}{\ell},
\]
and hence we have the equality of ideals of $\Z [G]$:
\[
(g) +\mathrm{Ann}_{\Z [G] }\left( \frac{\rho}{\ell} \right) =(g_1,g_2) +\mathrm{Ann}_{\Z [G] }\left( 
\frac{\rho}{\ell} \right).
\]
Since $d(\rho, \rho',\rho'') \ne 0$ (\cite[Proof of Theorem~4.4]{HA1}), $\{ \rho/\ell, \rho'/\ell, \rho''/\ell \}$
is a normal basis of $L_n$, we obtain $\mathrm{Ann}_{\Z [G] }(\rho/\ell)=0$,
and hence we obtain an   equality of ideals of  $\Z [G]$:
\[
(g)=(g_1,g_2).
\]
Consider the surjective ring homomorphism
\[
\nu: \Z [G] \longrightarrow \Z [\zeta],
\]
defined by $\nu (\sigma )=\zeta$. We calculate the image of the ideal $I:=(g)=(g_1,g_2)$ of $\Z [G]$.
 Since $\nu$ is surjective, we obtain the ideal of $\Z [\zeta]$:
\begin{equation}\notag
\nu (I) =( \nu (g)) =(\nu (g_1), \nu(g_2)).
\end{equation}
 From (\ref{eq:e-phi-psi-g1}), the elements $\nu (g_1)$ and $\nu (g_2)$ are given by
 \begin{align*}
 \nu (g_1) & =\frac{\ell}{c} (2-\zeta-\zeta^2)=9ec, \\
 \nu (g_2) & = \frac{\ell}{3ec^2} ((2n+3)+(3-n)\zeta -(n+6)\zeta^2) =9 \times \frac{A_n}{3} 
 \end{align*}
 with $A_n/3=n/3+\zeta+1 \in \Z [\zeta]$. Therefore, we have
 \begin{equation}\label{eq:nuI-gen-1}
 \nu (I)=(\nu (g))=9 (ec,A_n/3).
 \end{equation}
 We consider the decompositions of $ec$ and $A_n/3$ into  products of prime elements. 
 First, we consider the prime element $1-\zeta$ of $\Z [\zeta]$ above $3$.
 Since $n\equiv 3 \pmod{9}$, we have
 $A_n/3 =n/3 +1+\zeta \equiv 0 \pmod{(1-\zeta)}$. Furthermore, we have $(1-\zeta)^2 \nmid A_n/3$
 because if $(1-\zeta)^2 |A_n/3$ in $\Z [\zeta]$, then we have $3 |A_n/3 =n/3 +1+\zeta$. However, 
 this does not hold because $\{1, \zeta \}$ is an integral basis of $\Z [\zeta]$.
 We conclude that $
 (1-\zeta) || A_n/3$.
 On the other hand, we have $(1-\zeta)^2 ||ec$.
 According to the aforementioned facts, we have
 \begin{equation}\label{eq:ideal-3part-1}
 (1-\zeta) || (ec, A_n/3).
 \end{equation}
 Then, we consider prime elements other than $1-\zeta$.
 Since any prime number $p$ dividing $ec$ satisfies $p=3$ or $p\equiv 1 \pmod{3}$ (\cite[Lemma~4.1]{HA1})
 and the only prime ideal that  containes both $A_n$ and $A_n'$ is $(1-\zeta)$, we have
 \begin{equation}\label{eq:ec-decomp1}
 \frac{ec}{3}=(\pi_1 \cdots \pi_k)(\pi_1' \cdots \pi_k'),\quad \pi_1 \ldots \pi_k |A_n, \quad \pi_1' \ldots \pi_k' |A_n'
 \end{equation}
 where $\pi_1,\ldots,\pi_k$ are prime elements dividing $A_n$, and $\pi_1',\ldots, \pi_k'$ are their conjugates, respectively.
 From (\ref{eq:nuI-gen-1}),(\ref{eq:ideal-3part-1}) and (\ref{eq:ec-decomp1}), we obtain the following:
 \begin{align}
 \nu (I)= (\nu (g)) & =9 (1-\zeta)(\pi_1\cdots \pi_k) ((1-\zeta)(\pi_1' \cdots \pi_k'),  (1-\zeta)^{-1} (\pi_1\cdots \pi_k)^{-1} A_n/3) \notag\\
 & =9 (1-\zeta) (\pi_1\cdots \pi_k )(1) \label{eq:nu-gamma1} \\
 & =(\delta),\notag
 \end{align}
 where $\delta :=9(1-\zeta)(\pi_1 \cdots \pi_k)$. 
Since the integers $a_0$ and $a_1$ satisfy $(a_0+a_1\zeta)(a_0+a_1\zeta^2)=a_0^2-a_0a_1+a_1^2=ec$ and $a_0+a_1 \zeta | A_n$,
 we have $(\delta)=9(a_0+a_1\zeta)$.
Put $x:=9(a_0+a_1\sigma )\in \Z [G]$. From (\ref{eq:nu-gamma1}), we obtain $\nu (g)=9v(a_0+a_1\zeta) $ for some
 $v\in \Z [\zeta]^{\times}$. Since $\Z [\zeta]^{\times} =\{\pm 1,\pm\zeta,\pm\zeta^2 \}$ and $\Z[G]^{\times}=
 \{ \pm 1, \pm \sigma, \pm \sigma^2 \}$, there exists $\xi \in \Z [G]^{\times}$ that satisfies $\nu (\xi)=v^{-1}$, and we have
 $\nu (\xi g)=9(a_0+a_1\zeta)=\nu (x)$. We conclude that $\xi g-x \in \mathrm{Ker}(\nu)=(1+\sigma+\sigma^2)$.
 Therefore, there exists $m\in \Z$ that satisfies
 \[
 (\xi g-x)\cdot  \frac{\rho}{\ell} =m \mathrm{Tr}_{L_n/\Q}\left( \frac{\rho}{\ell} \right).
 \]
 By acting $e_{9de}$ on this equation, we obtain $ e_{9de} (\xi g-x)\cdot (\rho/\ell )=0$.
 Since $\alpha=g\cdot  (\rho/\ell)$ is an element of $e_{9de}\cdot  \calO_{L_n}$, we have
 $(e_{9de} g)\cdot (\rho/\ell)=g \cdot (\rho/\ell)$. Combining all these,  we have the following:
 \begin{align*}
\xi\cdot \alpha= (\xi g) \cdot \frac{\rho}{\ell} & =(e_{9de} \xi g) \cdot  \frac{\rho}{\ell} \\
 & =(e_{9de}x)\cdot \frac{\rho}{\ell} \\
 & =\frac{1}{ec^2}(3a_0+3a_1\sigma -(a_0+a_1)(1+\sigma+\sigma^2))\cdot \rho \\
 & =\frac{1}{ec^2} (3a_0\rho+3a_1\rho'-n(a_0+a_1)).
 \end{align*}
 Since $\xi \in \Z [G]^{\times}$, we have $\Z [G]\cdot \alpha =\Z[G]\cdot  (\xi \cdot \alpha)$, and from (\ref{eq:O1}) and
 (\ref{eq:alpha1}),  the assertion is obtained.
 
 Next, we consider the case where $n\equiv 0,6 \pmod{9}$.
 If $n=0$, then we can see directly that the assertion of the theorem holds. In the following, we assume that
 $n\ne 0$, hence $d(\rho, \rho',\rho'')\ne 0$ and $\{ \rho, \rho',\rho'' \}$ is a normal basis of $L_n$.
 In this case, we have $3\nmid d,\, 3 ||e,\, 3 \nmid c$ and the conductor is 
$\mathfrak f_{L_n}=3de$. The corresponding group of Dirichlet characters is $\mathscr{X}=\Phi_{3de} \coprod \Phi_{de}$ with
$\Phi_{3de}=\{ \chi, \chi^2 \},\, \Phi_{de}= \{\bf 1 \}$,
where $\bf 1$ is the trivial character and $\chi : G \to \C^{\times},\
\sigma \mapsto \zeta_3$. 
We have $e_{3de}=e_{\chi}+e_{\chi^2}= (2-\sigma-\sigma^2)/3,\, 
e_{de}=e_{\bf 1} =(1+ \sigma+\sigma^2)/3$, $\calD(\mathfrak f_{L_n})=\{ 3de,de \}$,
$\calA_{L_n/\Q}=\Z [G][e_{3de},e_{de}]$ and
$\calO_{L_n}=\Z [G] \eta_{3de} \oplus \Z [G] \eta_{de}$.
Similarly to the case of $n\equiv 3 \pmod{9}, \ n\not\equiv 12 \pmod{27}$,
we have the following:
\begin{equation}\label{eq:O2}
\calO_{L_n} =\Z [G] \cdot \eta_{3de} \oplus \Z [G] \cdot \eta_{de} = \Z [G]\cdot  \eta_{3de} \oplus \Z   =\calA_{L_n/\Q} \cdot (\eta_{3de}+1).
\end{equation}
Let $\alpha$ be an element of $\calO_{L_n}$ that satisfies the following:
\begin{equation}\label{eq:alpha2}
\Z [G]\cdot \alpha=\Z [G]\cdot \eta_{3de}.
\end{equation}
 We have
$e_{3de} \cdot \calO_{L_n} =\Z [G]\cdot \alpha$
from (\ref{eq:em}), (\ref{eq:O2}) and (\ref{eq:alpha2}). Let $ \{1,\phi,\psi \}$ be the integral basis of $L_n$ given by
Lemma~\ref{lem:IB}. 
Let $\ell :=3ec^2$. 
Similarly to the case of $n\equiv 3 \pmod{9}, \ n\not\equiv 12 \pmod{27}$,
there exists $g\in \Z [G]$
satisfying
\begin{equation}\notag
\alpha= g\cdot \frac{\rho}{\ell}
\end{equation}
and
we have 
\begin{equation}\notag
e_{3de}\cdot  \calO_{L_n} =\Z [G]\cdot \alpha=(e_{3de}\cdot  \phi) \Z +(e_{3de}\cdot \psi)\Z,
\end{equation}
where
\begin{align*}
e_{3de}\cdot \phi =g_1. \frac{\rho}{\ell}, \hspace{10mm} & g_1:= \frac{\ell}{3c} (2-\sigma -\sigma^2), \\
e_{3de}\cdot \psi =g_2. \frac{\rho}{\ell}, \hspace{10mm} & g_2:=\frac{\ell}{3ec^2} ((2n+3)+(3-n)\sigma -(n+6)\sigma^2) \notag .
\end{align*}
By calculating  the image of the ideal $I:=(g)=(g_1,g_2)$ by the map
$\nu : \Z [G] \to \Z [\zeta]$,
we obtain the ideal of $\Z [\zeta]$:
\begin{equation}\notag
\nu (I) =( \nu (g)) =(\nu (g_1), \nu(g_2)).
\end{equation}
where 
 \begin{align*}
 \nu (g_1) & =\frac{\ell}{3c} (2-\zeta-\zeta^2)=3ec, \\
 \nu (g_2) & = \frac{\ell}{3ec^2} ((2n+3)+(3-n)\zeta -(n+6)\zeta^2) =9 \times \frac{A_n}{3} 
 \end{align*}
 with $A_n/3=n/3+\zeta+1 \in \Z [\zeta]$. 
 Therefore, we have 
 \begin{equation}\label{eq:nuI-gen-2}
 \nu (I)=(\nu (g))=9 (ec/3,A_n/3).
 \end{equation}
 We consider the decompositions of $ec/3$ and $A_n/3$ into  products of prime elements. 
 Since $3 \nmid ec/3$, we have
 \begin{equation}\label{eq:ideal-3part-2}
 (1-\zeta) \nmid  (ec/3, A_n/3).
 \end{equation}
 For the  other prime elements, let
 \begin{equation}\label{eq:ec-decomp2}
 \frac{ec}{3}=(\pi_1 \cdots \pi_k)(\pi_1' \cdots \pi_k'),\quad \pi_1 \ldots \pi_k |A_n, \quad \pi_1' \ldots \pi_k' |A_n'
 \end{equation}
 where $\pi_1,\ldots,\pi_k$ are prime elements dividing $A_n$, and $\pi_1',\ldots, \pi_k'$ are their conjugates, respectively.
 Then, from (\ref{eq:nuI-gen-2}),(\ref{eq:ideal-3part-2}) and (\ref{eq:ec-decomp2}), we obtain the following:
 \begin{align}
 \nu (I)= (\nu (g)) & =9 (\pi_1\cdots \pi_k) ((\pi_1' \cdots \pi_k'),  (\pi_1\cdots \pi_k)^{-1} A_n/3) \notag\\
 & =9 (\pi_1\cdots \pi_k )(1) \notag \\
 & =(\delta),\notag
 \end{align}
 where $\delta :=9(\pi_1 \cdots \pi_k)$. 
Since the integers $a_0$ and $a_1$ satisfy $3(a_0+a_1\zeta)(a_0+a_1\zeta^2)=3(a_0^2-a_0a_1+a_1^2)=ec $ and
$ a_0+a_1\zeta |A_n$, 
 similarly to the case of $n\equiv 3 \pmod{9}, \ n\not\equiv 12 \pmod{27}$,
we can show that 
 \begin{align*}
\xi\cdot \alpha= (\xi g)\cdot  \frac{\rho}{\ell} & 
  =\frac{1}{ec^2}(3a_0+3a_1\sigma -(a_0+a_1)(1+\sigma+\sigma^2))\cdot \rho \\
 & =\frac{1}{ec^2} (3a_0\rho+3a_1\rho'-n(a_0+a_1)),
 \end{align*}
 for some $\xi \in \Z [G]^{\times}$. From (\ref{eq:O2}) and
 (\ref{eq:alpha2}),  the assertion is obtained.
\end{proof}

\section{Examples}\label{sec:ex}
\begin{ex}\label{ex:1}
We give examples for the case $n\equiv 3\pmod{9}, n\not\equiv 12 \pmod{27}$. A pair of integer $\{a_0, a_1 \} $ 
explicitly given as follows satisfies the conditions in Theorem~\ref{theo:OA}: $ec=a_0^2-a_0a_1+a_1^2$ and $a_0+a_1 \zeta  |A_n$.
Therefore, according to Theorem~\ref{theo:OA},  $\calO_{L_n} =\Z [G]\cdot \alpha \oplus \Z=\calA_{L_n/\Q}\cdot (\alpha+1)$ holds for
 the element $\alpha$ given as follows.
\begin{enumerate}
\item[(1)] Let $n$ be an integer that satisfies $n\equiv 3 \pmod{9}, n\not\equiv 12 \pmod{27}, n\ne 237$ and $1\leq n\leq 300$
(there are $22$ integers $n$ satisfying these conditions).
We have $e=1, c=3$.
A pair of integers is $a_0=1, a_1=-1$, and hence the corresponding element $\alpha \, (\in \calO_{L_n})$ is  $\alpha=(\rho-\rho')/3$.
\item[(2)] Let $n=237$. We have $\Delta_n=3^3\cdot 7^2\cdot 43$ and $e=7,c=3$.
A pair of integers is $a_0=4, a_1=-1$, and hence the  corresponding element $\alpha\,  (\in \calO_{L_n})$ is  $\alpha=(4\rho-\rho'-237)/21$.
\end{enumerate}
\end{ex}
\begin{ex}\label{ex:2}
We give examples for the case $n\equiv 0,6\pmod{9}$. A pair of integer $\{a_0, a_1 \} $ 
explicitly given as follows satisfies the conditions in Theorem~\ref{theo:OA}: $ec=3(a_0^2-a_0a_1+a_1^2)$ and $a_0+a_1 \zeta  |A_n$.
Therefore, according to Theorem~\ref{theo:OA}, $\calO_{L_n} =\Z [G]\cdot  \alpha \oplus \Z=\calA_{L_n/\Q}\cdot (\alpha+1)$ holds for
 the  corresponding element $\alpha$ given as follows.
\begin{enumerate}
\item[(1)] Let $n$ be an integer that satisfies $n\equiv 0,6 \pmod{9}, n \ne  54,90$ and $1\leq n\leq 100$
(there are $20$ integers $n$ satisfying these conditions).
We have $e=3, c=1$.
A pair of integers is $a_0=1, a_1=0$, and hence the  corresponding element $\alpha \, (\in \calO_{L_n})$ is  $\alpha=\rho-n/3$.
\item[(2)] Let $n=54$. We have $\Delta_n=3^2\cdot 7^3$ and $e=3,c=7$.
A pair of integers is $a_0=2, a_1=-1$, and hence the  corresponding element $\alpha (\in \calO_{L_n})$ is  $\alpha=(2\rho-\rho'-18)/49$.
\item[(3)] Let $n=90$. We have $\Delta_n=3^2\cdot 7^2 \cdot 19$ and $e=3\cdot 7 ,c=1$.
A pair of integers is $a_0=3, a_1=1$, and hence the  corresponding element $\alpha \, (\in \calO_{L_n})$ is  $\alpha=(3\rho+\rho'-120)/7$.
\end{enumerate}
\end{ex}


\begin{thebibliography}{10}
%
%
\bibitem{AF}
V.~Acciaro and C.~Fieker, Finding normal integral bases of cyclic number fields of prime degree, J. Symbolic Comput. 30, no.2, 129--136 (2000).
%
%
%
\bibitem{Ch}
A. Ch\^{a}telet, Arithm\'{e}tique des corps ab\'{e}lians du troisi\`{e}me  degr\'{e}, Ann. Sci. \'{E}cole Norm. sup. (4) 63, 109--160 (1946).
%
\bibitem{C}
T.~W.~Cusick, Lower bounds for regulators, in Number Theory,
Lecture Notes in Math.~1068,
Springer, Berlin,  63--73 (1984). 
%
%
%
%
\bibitem{HA1} Y.~Hashimoto and M.~Aoki, Normal integral bases and Gaussian periods in the simplest cubic fields, 
Ann.~Math.~du Qu\'{e}.~48, 157--173 (2024).
%
\bibitem{HA2} Y.~Hashimoto and M.~Aoki, Normal integral bases of Lehmer's cyclic quintic fields, to appear in Rmanujan~J..
%
\bibitem{HW}
S.~A.~Hambleton and H. C. Williams, Cubic fields with geometry,
CMS Books in Mathematics, Springer, Cham, 2018.
%
\bibitem{KS}
T. Kashio and R. Sekigawa, The characterization of cyclic cubic fields with power integral bases,
Kodai Math. J. ~44, no.~2, 290--306 (2021).
%
\bibitem{K}
T.~Komatsu, Cyclic cubic field with explicit Artin symbols,
Tokyo J. Math.~30 , no. 1, 169–-178 (2007).
%
\bibitem{La}
A. J. Lazarus, Gaussian periods and units in certain cyclic fields, Proc. Amer. Math. Soc. 115, no.~4, 961--968 (1992).
%
\bibitem{Leh}
E. Lehmer, Connection between Gaussian periods and cyclic units. 
Math. Comp. 50, no.182, 535-541 (1988).
%
\bibitem{Le}
G.~Lettl, The ring of integers of an abelian number field, J.~reine.~angew.~Math.~404, 162--170 (1990).
%
%
\bibitem{Leo}
H.~W.~Leopoldt, \"{U}ber die Hauptordnung der ganzen Elemente eines abelschen Zahlk\"{o}rpers, J.~
reine angew.~Math. 201, 119--149 (1959).  
%
%
\bibitem{S} 
D.~Shanks, The simplest cubic fields, Math. Comp. 28, 1137--1152 (1974).
%
%
\bibitem{W1}
L.~C.~Washington, Introduction to Cyclotomic Fields, 2nd ed.,  GTM 83, Springer-Verlag, New York, 1997.
%
\bibitem{W2} 
L.~C.~Washington, Class numbers of the simplest cubic 
fields. Math. Comp. 48, no.177, 371--384 (1987).
%
\end{thebibliography}
\end{document}